\newtheorem{theorem}{Theorem}[section]
\newtheorem{proposition}[theorem]{Proposition}
\newtheorem{lemma}[theorem]{Lemma}
\newtheorem{conjecture}[theorem]{Conjecture}
\theoremstyle{definition}
\newtheorem{definition}[theorem]{Definition}
\numberwithin{equation}{section}
\begin{document}

\baselineskip=15.5pt

\title[Holomorphic Riemannian metric and the fundamental group]{Holomorphic Riemannian metric and 
the fundamental group}

\author[I. Biswas]{Indranil Biswas}

\address{School of Mathematics, Tata Institute of Fundamental
Research, Homi Bhabha Road, Mumbai 400005, India}

\email{indranil@math.tifr.res.in}

\author[S. Dumitrescu]{Sorin Dumitrescu}

\address{Universit\'e C\^ote d'Azur, CNRS, LJAD}

\email{dumitres@unice.fr}

\subjclass[2010]{53B30, 53C50, 53A55}

\keywords{Holomorphic Riemannian metric, algebraic dimension, Killing fields, fundamental group}

\date{}

\begin{abstract} 
We prove that compact complex manifolds bearing a holomorphic Riemannian metric have infinite 
fundamental group.
\end{abstract}

\maketitle

\section{Introduction}

The complex analogue of a (pseudo)-Riemannian metric is a {\it holomorphic Riemannian metric}. Recall that a holomorphic 
Riemannian metric $g$ on a complex manifold $X$ is a holomorphic section of the
vector bundle ${\rm S}^2(T^*X)$ of complex quadratic 
forms on the holomorphic tangent bundle $TX$ which is nondegenerate at every point of $X$ (see Definition~\ref{riem 
metric}).

Given a holomorphic Riemannian metric $g$ on $X$, there is a unique torsion-free
holomorphic affine connection $\nabla$ on the holomorphic tangent bundle $TX$ such that $g$ is parallel with respect to
$\nabla$, or in other words,
$$
\xi \cdot (g(s,\, t))\,=\, g(\nabla_\xi s,\, t)+ g(s,\, \nabla_\xi t)
$$
for all locally defined holomorphic vector fields $\xi$, $s$ and $t$;
this unique connection $\nabla$ is known as the {\it Levi-Civita connection} for $g$.
The curvature tensor of $\nabla$ vanishes identically if 
and only if $g$ is locally isomorphic to the standard flat model $dz_1^2 + \ldots + dz_n^2$ on ${\mathbb C}^n$,
where $n\,=\, \dim_{\mathbb C} X$. More details on the 
geometry of holomorphic Riemannian metrics can be found in \cite{Le,D3,DZ}.

Compact complex manifolds $X$ bearing holomorphic Riemannian metrics are
rather special. First notice that a holomorphic Riemannian metric $g$ on $X$ produces a
holomorphic isomorphism between $TX$ and its dual $T^*X$. In particular, the canonical
line bundle and the anticanonical line bundle
of $X$ are holomorphically isomorphic, which implies that the canonical bundle is of order two (this
implies that the canonical line bundle of a
certain unramified double cover of $X$ is trivial). Moreover, if $X$ is K\"ahler, the classical Chern--Weil theory shows
that the Chern classes with rational coefficients $c_i(X,\mathbb{Q})$ must vanish \cite[pp.~192--193, Theorem~4]{At}. It
now follows, using Yau's theorem
proving Calabi's conjecture~\cite{Ya} (see also~\cite{Be} and ~\cite{IKO}), that $X$ admits a flat K\"ahler metric, and
consequently, $X$ admits a finite unramified cover which is a complex torus. Note that any holomorphic Riemannian metric 
on a complex torus is necessarily translation invariant and, consequently, flat.

An interesting family of compact complex non-K\"ahler manifolds which generalizes complex tori 
consists of those complex manifolds whose holomorphic tangent bundle is holomorphically trivial. 
They are known as {\it parallelizable manifolds}. Any parallelizable manifold is biholomorphic to 
the quotient of a complex Lie group $G$ by a co-compact lattice $\Gamma$ in $G$~\cite{Wa}. A 
parallelizable manifold $G/ \Gamma$ is K\"ahler if and only if $G$ is abelian \cite{Wa}. Any 
nondegenerate complex quadratic form on the Lie algebra of $G$ uniquely defines a right invariant 
holomorphic Riemannian metric on $G$ which descends to a holomorphic Riemannian metric on the 
quotient $G / \Gamma$ of $G$ by a lattice $\Gamma$. In particular, the Killing quadratic form on 
the Lie algebra of a complex semi-simple Lie group $G$, being nondegenerate and invariant under 
the adjoint representation, furnishes a bi-invariant holomorphic Riemannian metric on $G$ and a 
$G$-invariant holomorphic Riemannian metric on any quotient $G / \Gamma$ by a lattice $\Gamma$.

When $G$ is ${\rm SL}(2, \mathbb{C})$, exotic deformations of parallelizable manifolds
${\rm SL}(2, \mathbb{C})/ \Gamma$ bearing holomorphic Riemannian metrics were constructed by Ghys in~\cite{Gh}. Let us
briefly recall Ghys' construction. Choose a uniform lattice $\Gamma$ in ${\rm SL}(2, \mathbb{C})$
as well as a group homomorphism $u\,:\, \Gamma \,\longrightarrow\, {\rm SL}(2, \mathbb{C})$, and consider the embedding 
$$
\Gamma\, \longrightarrow\, {\rm SL}(2, \mathbb{C})\times {\rm SL}(2, \mathbb{C})\, ,\ \
\gamma\,\longmapsto\, (u(\gamma),\, \gamma)\, .
$$
Using this homomorphism, the group
$\Gamma$ acts on ${\rm SL}(2, \mathbb{C})$ via the left and right translations
of ${\rm SL}(2, \mathbb{C})$. More precisely, the action is given by:
$$(\gamma,\,x)\, \longmapsto \, u(\gamma^{-1}) x \gamma \, \in\, {\rm SL}(2,\mathbb{C})$$
for all $(\gamma,\, x) \,\in\, \Gamma \times {\rm SL}(2,\mathbb{C})$. 
It is proved in \cite{Gh} that for $u$ close enough to the trivial homomorphism,
the group $\Gamma$ acts properly and freely on ${\rm SL}(2, \mathbb{C})$ such that the
corresponding quotient $M(u, \Gamma)$ is a compact complex manifold (covered by
${\rm SL}(2, \mathbb{C})$). For a generic homomorphism $u$, these examples do not admit a parallelizable manifold as
a finite cover (see Corollary 5.4 in \cite{Gh} and its proof which shows that these generic examples and their finite covers admit no nontrivial holomorphic vector fields). Since the Killing 
quadratic form on $\text{Lie}({\rm SL}(2, \mathbb{C}))$
is invariant under the adjoint representation, the induced holomorphic Riemannian metric is bi-invariant on 
${\rm SL}(2, \mathbb{C})$ and hence it descends to the quotient $M(u, \Gamma)$. Notice that
a holomorphic Riemannian metric $g$ on $M(u, \Gamma)$ constructed this way is locally isomorphic to the 
complexification of the spherical metric on $S^3$ and it has constant non-zero sectional curvature.
 
The general case of a compact complex threefold $X$ bearing a holomorphic Riemannian metric $g$ shares many features of 
the previous construction of Ghys. In this direction, it was proved in \cite{D3,DZ} that $g$ is necessarily {\it locally 
homogeneous} (see Section \ref{section: geometric structures}), and $X$ admits a finite unramified cover bearing a 
holomorphic Riemannian metric of constant sectional curvature. In view of this we make the following
conjecture:

\begin{conjecture}\label{con1}
Any holomorphic Riemannian metric on a compact complex manifold $X$ is locally homogeneous.
\end{conjecture}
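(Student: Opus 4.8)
A natural line of attack on Conjecture~\ref{con1} is to realize $(X,g)$ as a holomorphic rigid geometric structure, to combine Gromov's open--dense orbit theorem with the analytic propagation of local Killing fields, and then to prove that the locus of non-homogeneity is empty. One would begin with the obvious reductions. If $X$ is K\"ahler then, as recalled above, a finite unramified cover of $X$ is a complex torus, on which every holomorphic Riemannian metric is translation invariant and hence locally homogeneous; for $\dim_{\mathbb C}X\le 3$ the statement is exactly the theorem of \cite{D3,DZ}; and one may also dispose of the case where the Levi-Civita connection $\nabla$ is flat, since then $g$ is locally the standard model $dz_1^2+\cdots+dz_n^2$. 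So assume $X$ non-K\"ahler, $n\,=\,\dim_{\mathbb C}X\ge 4$, and $g$ non-flat; it then seems natural to split according to the algebraic dimension $a(X)$, first treating $a(X)\,=\,0$ and afterwards reducing $a(X)\,>\,0$ to it by analysing $g$ and the Killing algebra along the fibres of the algebraic reduction of $X$.

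The key structural input is that $\nabla$ is a torsion-free \emph{holomorphic} affine connection, so that $(\nabla,g)$ is a holomorphic rigid geometric structure in Gromov's sense: a germ of a holomorphic vector field preserving $(\nabla,g)$ --- that is, a local holomorphic Killing field of $g$ --- is determined by a jet of bounded order at a single point, and these jets satisfy a closed holomorphic differential system. Consequently, by the holomorphic analogue of the Amores--Nomizu extension theorem (developed by Dumitrescu), every local Killing field continues analytically along all paths in $X$; this produces a finite-dimensional complex Lie algebra $\mathfrak g$ of Killing fields on the universal cover $\widetilde X$, equivariant for a homomorphism $\pi_1(X)\longrightarrow {\rm Aut}(\mathfrak g)$, and hence a $\pi_1(X)$-invariant singular holomorphic distribution $\mathcal D\subseteq T\widetilde X$ spanned pointwise by the values of the elements of $\mathfrak g$. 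Gromov's theorem then provides a dense Zariski-open subset $U\subseteq X$ over which $\mathcal D$ has full rank $n$, i.e.\ $(X,g)$ is locally homogeneous on $U$; the conjecture amounts to the assertion that the degeneracy set $Z\,=\,\{x\in X:\ {\rm rk}\,\mathcal D_x<n\}$, a proper analytic subset of $X$, is empty.

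Showing $Z=\varnothing$ is the genuine obstacle. I would try, in turn: (i) a maximum-principle argument proving that a natural $\mathfrak g$-invariant holomorphic tensor --- the curvature of $\nabla$ together with its covariant derivatives --- whose pointwise algebraic type is locally constant on $U$ cannot change type across $Z$, forcing $Z=\varnothing$; (ii) an induction on $n$, restricting the geometric structure to a desingularization of $Z$ and contradicting the minimality of the ambient $Z$; (iii) a dynamical argument exploiting the infiniteness of $\pi_1(X)$ and the monodromy of $\mathfrak g$ to make some $\mathfrak g$-orbit dense, in the spirit of the treatments of ${\rm SL}(2,\mathbb C)/\Gamma$ and its Ghys deformations; and, in small algebraic dimension, (iv) a direct classification of the compact complex non-K\"ahler manifolds that can carry such a metric at all, constrained for instance by the vanishing of all rational Chern classes (Atiyah) and by $K_X$ being of order two. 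The principal difficulty is that, whereas for $n\le 3$ the pointwise model of the structure is governed by the small group ${\rm SO}(3,\mathbb C)$ and the possible holonomies and Killing algebras can be enumerated, in dimension $\ge 4$ the holonomy of $\nabla$ and the isotropy representation of $\mathfrak g$ are large and unclassified; controlling precisely how they may degenerate along $Z$ --- equivalently, showing that the ``bad set'' of Gromov's theorem is empty in this setting --- is the heart of the matter and at present seems to require genuinely new ideas.
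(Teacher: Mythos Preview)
The statement you are addressing is stated in the paper as a \emph{conjecture}, not as a theorem; the paper gives no proof of it and does not claim one. What the paper actually proves is a \emph{consequence} of the conjecture, namely Theorem~\ref{main thm}: every compact complex manifold carrying a holomorphic Riemannian metric has infinite fundamental group. That argument proceeds by contradiction (assume $X$ simply connected), invokes the abelian Killing algebra $A$ from Theorem~\ref{theorem: algebraic dimension}, and then splits according to whether $g\vert_A$ is degenerate (a one-form argument via the algebraic reduction) or nondegenerate (a compact-torus/free-action argument plus induction on dimension). It does \emph{not} establish local homogeneity.

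Your proposal is not a proof either, and to your credit you say so: you correctly set up the Gromov rigid-structure framework, obtain the open dense set $U$ on which the Killing algebra is transitive, and identify the crux as showing that the degeneracy locus $Z$ is empty. But items (i)--(iv) are heuristics, not arguments. In particular: (i) the curvature type being locally constant on $U$ does not by itself prevent a drop along $Z$---there is no maximum principle for the algebraic type of a holomorphic tensor across an analytic set; (ii) restricting a holomorphic Riemannian metric to a singular analytic subset does not yield a holomorphic Riemannian metric there, so the induction has no structure to recurse on; (iii) invoking the infiniteness of $\pi_1(X)$ is circular in this context, since that is exactly what the paper deduces \emph{from} (a weakening of) the conjecture; and (iv) no such classification exists in dimension $\ge 4$. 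So the genuine gap is precisely the one you name at the end: controlling the degeneration of the Killing orbits across $Z$. Since the paper has no proof to compare against, the honest summary is that your write-up is a reasonable survey of the known reductions and obstacles, but it does not advance beyond the state of the art, and there is nothing in the paper that fills the gap either.
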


Conjecture \ref{con1} implies that $X$ must have infinite fundamental group (see Section \ref{simply connected}).
 
The main result proved here is the following (see Theorem \ref{main thm}):
 
{\it Every compact complex manifold bearing a holomorphic Riemannian metric has infinite 
fundamental group.}
 
This generalizes Corollary 4.5 and Theorem 4.6 in \cite{BD}, where the same result was proved 
under the extra hypothesis that the algebraic dimension of $X$ is either zero or one.
 
Some parts of the method of proof of Theorem \ref{main thm} generalize to the broader framework of {\it rigid geometric
structures} in the sense of Gromov \cite{DG, Gr} and give the following (see Theorem \ref{rigid}):
 
{\it Let $X$ be a compact complex manifold with trivial canonical bundle and algebraic dimension one. If $X$ admits a 
holomorphic rigid geometric structure, then the fundamental group of $X$ is infinite.}
 
Theorem \ref{rigid} was proved in \cite{BD} under the hypothesis that $X$ is of algebraic dimension zero. It may be
mentioned that for general rigid geometric structures, some hypothesis on the algebraic dimension is needed.
Indeed, projective embeddings of a compact projective Calabi--Yau manifold in some complex projective space are 
holomorphic rigid geometric structures that are not locally homogeneous \cite{DG, Gr}.
 
Nevertheless, we make the following general conjecture which encapsulates the case of holomorphic Riemannian metrics:
 
\begin{conjecture}\label{con2}
Any holomorphic geometric structure of affine type $\phi$ on a 
compact complex manifold with trivial canonical bundle $X$ is locally homogeneous. This implies
that if $\phi$ is rigid, then the fundamental group of $X$ is infinite.
\end{conjecture}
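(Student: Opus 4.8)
The plan is to treat the conjecture's two assertions separately. The second one — that a \emph{rigid} $\phi$ which is locally homogeneous forces $\pi_1(X)$ to be infinite — is the more accessible, and I would argue it along the lines indicated in Section~\ref{simply connected}. Were $\pi_1(X)$ finite, the universal cover $\widehat X$ would be a compact, simply connected complex manifold with trivial canonical bundle carrying the pulled-back structure, which is still rigid and locally homogeneous. On the simply connected $\widehat X$, the local Killing fields of a rigid holomorphic structure extend to global holomorphic Killing fields (the standard monodromy-free extension argument for rigid structures, cf.\ \cite{Gr, DG}), so the Killing Lie algebra $\mathfrak g \subseteq H^0(\widehat X, T\widehat X)$ — finite dimensional because $\widehat X$ is compact — acts, by local homogeneity, with open and hence (by connectedness) transitive orbits. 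Thus $\widehat X = G/H$ is a compact complex homogeneous manifold with $G$ a connected complex Lie group; by the structure theory of such manifolds (Tits fibration, Borel--Remmert), triviality of $K_{\widehat X}$ forces the projective rational base of the Tits fibration to be a point, so $\widehat X$ is complex parallelizable, $\widehat X = L/\Gamma$ with $L$ a complex Lie group and $\Gamma$ a cocompact lattice. Simple connectivity makes $\Gamma$ trivial, so $\widehat X = L$ is a compact complex Lie group, hence a complex torus, hence — being simply connected — a point, contradicting $\dim X > 0$. Therefore $\pi_1(X)$ is infinite.

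The substance of the conjecture is the first assertion, local homogeneity of $\phi$, and here I would follow the strategy already carried out for holomorphic Riemannian metrics in dimension three in \cite{D3, DZ} and for small algebraic dimension in \cite{BD}. When $\phi$ is rigid, Gromov's open-dense theorem \cite{Gr, DG} provides a dense Zariski-open $U \subseteq X$ on which the Killing algebra of $\phi$ acts transitively; the task is to remove the complementary analytic subset — the \emph{degeneracy locus} where the orbit dimension of the Killing sheaf drops. Two inputs from the hypotheses should drive this. First, triviality of $K_X$ gives a nowhere-vanishing holomorphic volume form, automatically preserved by the automorphism pseudogroup of $\phi$; this reduces the relevant jet-frame bundle to a unimodular group and makes all Killing flows volume preserving, which is what powers the completeness and recurrence arguments of \cite{DZ}. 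Second, I would induct on $\dim X$ along the algebraic reduction $X \dashrightarrow Y$: automorphisms, and so local Killing fields, descend to $Y$, and $\phi$ should induce — via a suitable reduction — a holomorphic structure of affine type on a generic fibre, which again has trivial canonical bundle but now algebraic dimension zero, so the base of the induction should be within reach of the methods of \cite{BD}, while the algebraic dimension one situation is Theorem~\ref{rigid}. The affine-type hypothesis enters precisely here: it is what guarantees that the structure induced on the fibres is still of affine type, so the non-locally-homogeneous rigid structures coming from projective embeddings of Calabi--Yau manifolds cannot reappear in the fibres and break the induction.

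The main obstacle — and the reason this is still a conjecture — is exactly the propagation of local homogeneity across the degeneracy locus: Gromov's theory is intrinsically generic, and there is at present no general mechanism forcing the Killing algebra of an affine-type rigid structure on a compact complex manifold to act transitively everywhere; in dimension three \cite{D3, DZ} this demanded a delicate classification of the possible local models, with no analogue available in general. Closely tied to it is the intermediate range $2 \le a(X) \le \dim X - 1$, where neither \cite{BD} nor Theorem~\ref{rigid} applies and where one must control the geometry of $\phi$ transverse to the (possibly singular) fibres of $X \dashrightarrow Y$ and near the indeterminacy locus of this meromorphic map. By contrast, the global extension of Killing fields on the simply connected cover and the Tits-fibration step used for the second assertion are routine. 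A complete proof therefore seems contingent on a genuinely new idea for excluding Gromov-type degeneracies under the affine-type hypothesis.
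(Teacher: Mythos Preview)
The statement is a \emph{conjecture}; the paper does not prove the first assertion (local homogeneity of every affine-type holomorphic structure on a compact $X$ with trivial $K_X$), and your diagnosis of why it remains open --- the absence of a mechanism to push transitivity of the Killing sheaf across the Gromov degeneracy locus, and the uncharted range of intermediate algebraic dimension --- is accurate and matches the paper's own stance.

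The second assertion (locally homogeneous $+$ rigid $\Rightarrow$ $\pi_1(X)$ infinite) \emph{is} proved in the paper, as the Proposition opening Section~\ref{simply connected}. Your argument for it is correct but takes a heavier route than the paper's. The paper argues as follows: on the simply connected cover $\widehat X$ pick global Killing fields $X_1,\dots,X_n$ that are linearly independent and span $T\widehat X$ at a generic point; contract a nowhere-vanishing holomorphic volume form against $X_1\wedge\cdots\wedge X_n$ to obtain a holomorphic function on the compact $\widehat X$, which is therefore a nonzero constant; hence $X_1,\dots,X_n$ trivialise $T\widehat X$ everywhere, and Wang's theorem gives $\widehat X\cong L/\Gamma$ for a lattice $\Gamma$ in a complex Lie group $L$, contradicting simple connectedness. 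Your route --- deduce transitivity of the global Killing algebra, write $\widehat X=G/H$, then invoke Borel--Remmert/Tits to force the rational base to a point from $K_{\widehat X}\cong\mathcal O$ --- reaches the same conclusion, but the step ``$K$ trivial $\Rightarrow$ Tits base is a point'' needs its own justification (it is true, but not a one-liner), whereas the paper's volume-form trick is entirely self-contained and uses nothing beyond Wang. What your approach buys is an explicit homogeneous-space description along the way; what the paper's buys is minimality of input.
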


Conjecture \ref{con2} was proved to be true in the following contexts:
\begin{itemize}
\item the complex manifold is K\"ahler, so it is a Calabi--Yau manifold 
\cite{D2};

\item the holomorphic tangent bundle of the manifold is polystable with respect 
to some Gauduchon metric on it \cite{BD};

\item the manifold is Moishezon \cite{BD1};

\item the manifold is a complex torus principal bundle over a compact K\"ahler (Calabi-Yau) manifold \cite{BD1}.
\end{itemize} 

\section{Geometric structures and Killing fields} \label{section: geometric structures}

Let $X$ be a complex manifold of complex dimension $n$.

\begin{definition}\label{riem metric} A {\it holomorphic Riemannian metric}
on $X$ is a holomorphic section
$$
g\, \in\, H^0(X,\, \text{S}^2((TX)^*))
$$
such that for every point $x\, \in\, X$ the quadratic form $g(x)$ on $T_xX$
is nondegenerate. 
\end{definition}

Holomorphic Riemannian metrics and holomorphic affine connections are {\it rigid geometric 
structures} in the sense of Gromov \cite{Gr} (see also \cite{DG}). Let us briefly recall the definition of rigidity in 
the holomorphic category.

For any integer $k \,\geq\,1$, we associate the principal bundle of $k$-frames $R^k(X)\,\longrightarrow\, X$, which is the 
bundle of $k$--jets of local holomorphic coordinates on $X$. The corresponding structural group $D^k$ is the group of 
$k$-jets of local biholomorphisms of $\mathbb{C}^n$ fixing the origin. This $D^k$ is known to be a complex algebraic group.

\begin{definition}
A holomorphic geometric structure $\phi$ of order $k$ on $X$ is a holomorphic $D^k$-equivariant map from $R^k(X)$ to a
complex algebraic manifold $Z$ endowed with an algebraic action of $D^k$. The geometric structure $\phi$ is said to be
of affine type if $Z$ is a complex affine variety.
\end{definition}

Holomorphic tensors are holomorphic geometric structures of affine type of order one, and holomorphic affine connections 
are holomorphic geometric structures of affine type of order two. Holomorphic embeddings in projective spaces, holomorphic 
foliations and holomorphic projective connections are holomorphic geometric structure of non--affine type \cite{DG,Gr}.

A (local) biholomorphism $f$ between two open subsets of $X$ is called a (local) {\it isometry 
(automorphism)} for a geometric structure $\phi$ if the canonical lift of $f$ to $R^k(X)$ 
preserves the fibers of $\phi$.

The associated notion of a (local) infinitesimal symmetry is the following:

\begin{definition} A (local) holomorphic vector field on $Y$ is a (local) {\it Killing field} of a holomorphic
geometric structure $\phi \,:\, R^k(X)\,\longrightarrow\, Z$ if its canonical lift to $R^k(X)$ preserves
the fibers of $\phi$.
\end{definition}

In other words, $Y$ is a Killing field of $\phi$ if and only if its (local) flow preserves $\phi$. The Killing vector 
fields form a Lie algebra with respect to the Lie bracket operation of vector fields.

A classical result in Riemannian geometry shows that $Y$ is a Killing field of a holomorphic 
Riemannian metric $g$ on $X$ if and only if $$TX\,\longrightarrow\, TX\, , \ \ v\, \longmapsto\, 
\nabla_{v} Y$$ is a skew-symmetric section of $\text{End}(TX)$ with respect to $g$, where $\nabla$ 
is the Levi-Civita connection of $g$~\cite{K}.

A holomorphic geometric structure $\phi$ is {\it rigid} of order $l$ in the sense of Gromov if any 
local automorphism of $\phi$ is completely determined by its $l$-jet at any given point (see 
\cite{DG,Gr}).

Holomorphic affine connections are rigid of order one in the sense of Gromov. The rigidity arises from 
the fact that the local biholomorphisms fixing a point and preserving a connection actually 
linearize in exponential coordinates, so they are completely determined by their differential at 
the fixed point. Holomorphic Riemannian metrics, holomorphic projective connections and 
holomorphic conformal structures for dimension at least three are rigid holomorphic geometric 
structures. On the other hand, holomorphic symplectic structures and holomorphic foliations are 
not rigid~\cite{DG}.

Local Killing fields of a holomorphic rigid geometric structure $\phi$ form a locally constant 
sheaf of Lie algebras \cite{DG,Gr}. The typical fiber in that case is a finite dimensional Lie 
algebra called the Killing algebra of $\phi$. The geometric structure $\phi$ is called {\it 
locally homogeneous} if its Killing algebra acts transitively on $X$.

The standard facts about smooth actions of Lie groups preserving an analytic rigid 
geometric structure and a finite volume are adapted to our holomorphic set-up
(compare with \cite[Section 3.5]{Gr} and also with \cite[Section 5]{Fr}).

\begin{lemma}\label{isotropy}
Let $X$ be a compact complex manifold endowed with a holomorphic rigid geometric structure $g$. 
Assume that the automorphism group $G$ of $(X,\,g)$ preserves a smooth volume on $X$ and that $G$ 
is noncompact. Then at any point $x \,\in\, X$, there exists at least one nontrivial local Killing 
field $Y$ of $g$ such that $Y(x)\,=\,0$.
\end{lemma}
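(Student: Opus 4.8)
The plan is to deduce Lemma~\ref{isotropy} from the rigidity of $g$ together with the noncompactness of the automorphism group $G$, essentially by running the standard Gromov--Frances argument in the holomorphic setting. First I would fix a point $x_0 \in X$ and choose a sequence $g_n \in G$ going to infinity. After passing to a subsequence, I would arrange (using compactness of $X$) that $g_n(x_0) \to y$ for some point $y \in X$ and, more importantly, look at the induced sequence of lifts $\widehat{g_n}$ to the frame bundle $R^k(X)$; since $G$ acts on the finite-dimensional Killing algebra $\mathfrak{g}$ of $g$ (a locally constant sheaf, as recalled above), the action near $x_0$ is controlled by a finite jet, and noncompactness of $G$ forces the sequence of $l$-jets of $g_n$ at $x_0$ to be unbounded in the relevant jet group $D^l$.

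Next I would use the rigidity of $g$ of order $l$: a local automorphism is determined by its $l$-jet at a point, so there is a well-defined injective map from germs of local automorphisms at $x_0$ into the jet group, and the image is a closed (algebraic) subvariety; the point is that the isotropy can be detected at the level of jets. Because the $l$-jets of the $g_n$ at $x_0$ escape every compact subset of $D^l$, while the volume-preserving hypothesis prevents the ``expansion'' from happening in a way that would force the volume to blow up or collapse, one extracts, after renormalizing (composing $g_n$ with a suitable frame change and passing to a limit in the space of germs of local isometries — this limit exists precisely because of rigidity, which makes the space of local isometries finite-dimensional and locally compact), a nontrivial local one-parameter family of local isometries fixing a point. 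Differentiating this family yields a nontrivial local Killing field $Y$ vanishing at that point; transporting along the locally constant sheaf of Killing fields (or simply repeating the argument at an arbitrary $x$, using homogeneity of the construction) gives such a $Y$ at every $x \in X$.

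The main obstacle is making the extraction of the limiting local isometry rigorous: one must show that the unbounded sequence of $l$-jets, after an appropriate renormalization inside $D^l$, converges to a jet that is still integrable to an actual local automorphism of $g$. This is exactly where rigidity is essential — it guarantees that the set of integrable jets (jets of genuine local isometries) is closed, so the limit jet is integrable — and where the invariant volume is used, to rule out the degenerate case in which the renormalized limit is constant (i.e.\ to guarantee the limiting isometry is nontrivial while its jet-derivative at the fixed point is nontrivial, producing a nonzero Killing field). I would model this step on \cite[Section~3.5]{Gr} and \cite[Section~5]{Fr}, checking that each measure-theoretic and analytic input (Borel density type arguments, the recurrence furnished by the finite invariant volume, and the closedness of the isometry pseudogroup) carries over verbatim to the holomorphic category, where the objects involved are complex algebraic and the maps holomorphic.
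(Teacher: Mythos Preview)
Your outline has the right ingredients but a genuine gap in how the vanishing point is pinned down. You start by choosing an arbitrary $x_0$ and a divergent sequence $g_n\in G$, then use compactness of $X$ to arrange $g_n(x_0)\to y$. The renormalization/limiting procedure you sketch (and which Frances indeed carries out) produces, at best, a nontrivial local isometry fixing the \emph{limit} point $y$, hence a local Killing field vanishing at $y$, not at $x_0$. Your proposed remedy --- ``transporting along the locally constant sheaf'' or ``repeating the argument at an arbitrary $x$'' --- does not work: parallel transport of a Killing field does not move its zero set, and rerunning the argument at another base point just produces a Killing field vanishing at some other uncontrolled limit point. So as written you only obtain a Killing field vanishing \emph{somewhere}, not at every point.

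The paper closes this gap precisely through the invariant volume, but in a more specific way than ``preventing blow-up''. Poincar\'e recurrence for the $G$-action on the finite measure space $X$ gives, for a \emph{dense} set of points $x$, an unbounded sequence $g_j\in G$ with $g_j\cdot x\to x$ (same point). Lifting to $R^{k+l}$ and using $D^{k+l}$-equivariance of the jet $\phi^{(l)}$, one gets an unbounded sequence in $D^{k+l}$ pushing $\phi^{(l)}(\widehat{x})$ back to itself; since the $D^{k+l}$-action on $Z^{(l)}$ is algebraic, the orbit is locally closed and the stabilizer $I$ of $\phi^{(l)}(\widehat{x})$ is an algebraic subgroup containing an unbounded sequence, hence $\dim_{\mathbb C} I^0\geq 1$. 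Any one-parameter subgroup of $I^0$ integrates (Gromov, Corollary~1.6~C) to a local Killing field vanishing at $x$. Finally, the extension from the dense set of recurrent points to all of $X$ is not by transport but by semicontinuity: the evaluation map from the (constant-dimensional) Killing algebra to $T_xX$ has maximal rank on an open set; if that open set were nonempty it would meet the recurrent points, a contradiction. You do mention recurrence in your last paragraph, but it needs to be the driving mechanism, not a side remark.
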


\begin{proof}
Let $\phi \,:\, R^k(X) \,\longrightarrow\, Z$ be a holomorphic rigid geometric structure of order $k$. Then there exists 
$l \,\in\, \mathbb{N}$ large enough such that the $l$--jet $$\phi^{(l)} \,:\, R^{k+l}\,
\longrightarrow\, Z^{(l)}$$ of $\phi$
satisfies the condition
that the orbits of the local automorphisms of $\phi$ are the projections on $X$ of the inverse images, through 
$\phi^{(l)}$, of the $D^{k+l}$-orbits in $Z^{(l)}$ \cite{DG, Gr}. Recall that the map $\phi^{(l)}$ is $D^{k+l}$-equivariant.

Since the automorphism group $G$ preserves the finite smooth measure ($X$ is
compact), by Poincar\'e recurrence theorem, for any generic 
point $x \,\in\, X$ there exists an unbounded sequence of elements $g_j \,\in\, G$, $j\, \geq\, 1$,
(meaning a sequence leaving every compact subset in $G$) such that $g_j \cdot x$ converges to $x$. We
lift the $G$--action on $X$ to the bundle $R^{k+l}$ and we consider the orbit $g_j \cdot 
\widehat{x}$ of a lift $\widehat{x}$
of $x$ to $R^{k+l}$. There exists a sequence $\{p_j\}_{j=1}^\infty$ in $D^{k+l}$ such that
$g_j(\widehat{x}) \cdot p_j^{-1}$ converges to $\widehat{x}$. Notice that $\{p_j\}_{j=1}^\infty$ is an unbounded sequence
in $D^{k+l}$, since the lifted $G$-action on $R^{k+l}$ is proper. Using the equivariance property
of $\phi^{(l)}$ we conclude that
$p_j \cdot \phi^{(l)}(\widehat{x})$ converges to $\phi^{(l)}(\widehat{x})$.

The action of the algebraic group $D^{k+l}$ on $Z^{(l)}$ is algebraic. This implies that the $D^{k+l}$-orbits in $Z^{(l)}$ 
are locally closed \cite{Ro}. In particular, this also
remains valid for the orbit $\mathcal O$ of $\phi^{(l)}(\widehat{x})$. Let 
us denote by $I$ the stabilizer of $\phi^{(l)}(\widehat{x})$ in $D^{k+l}$. The orbit $\mathcal O$ with the induced 
topology coming from $Z^{(l)}$ is homeomorphic to the quotient $D^{k+l}/I$. The above observation that $p_j \cdot 
\phi^{(l)}(\widehat{x})$ converges to $\phi^{(l)}(\widehat{x})$ is equivalent to the existence of a sequence $(\eta_j)$ in 
$D^{k+l}$ converging to identity such that $\eta_j \cdot p_j \,\in\, I$. Since $I$ contains an unbounded sequence in $D^{k+l}$ 
and it is an algebraic group --- hence having only finitely many connected components --- it follows that its connected
component $I^0\, \subset\, I$ containing the identity element is a connected complex algebraic subgroup of $D^{k+l}$ of complex dimension at least one. Any
one parameter subgroup in $I^0$ integrates a local Killing field $Y$ vanishing at $x$ (see Corollary 1.6 C in \cite{Gr}). 

We proved that for any recurrent point $x \in X$, there exists at least one nontrivial local 
Killing field Y of $g$ such that $Y(x)\,=\,0$. Equivalently, at any recurrent point, the 
dimension of the local orbit of the Killing algebra is strictly less than the dimension of the 
Killing algebra (which is same on all of $X$). This property is thus true on all of $X$, by 
density of recurrent points in $X$.
\end{proof}

Given a holomorphic Riemannian metric $g$, there is a holomorphic volume form $\omega_g$ 
associated to it, and hence there is an associated volume form on $X$ given by $(\sqrt{-1})^n 
\cdot \omega_g\wedge \overline{\omega_g}$. The automorphism group for $g$ preserves the smooth 
measure on $X$ associated to $(\sqrt{-1})^n \cdot \omega_g\wedge \overline{\omega_g}$.

Recall that the automorphism group of a compact complex simply connected manifold $X$ endowed with 
a holomorphic rigid geometric structure $g$ is known to admit finitely many connected components 
\cite[Section 3.5]{Gr}. Therefore, under these assumptions, the automorphism group of $(X,\,g)$ is 
compact if and only if its connected component containing the identity element is compact.

\section{Algebraic reduction and orbits of Killing fields} \label{section: alg dim}

Recall that the \emph{algebraic dimension} of a compact complex manifold $X$ is the transcendence 
degree of the field of meromorphic functions ${\mathcal M} (X)$ on $X$ over the field of complex 
numbers. The algebraic dimension of a projective manifold coincides with its complex dimension. In 
general, the algebraic dimension of compact complex manifolds of complex dimension $n$ may be less 
than $n$ and in fact takes all integral value between $0$ and $n$. Compact complex manifolds of 
maximal algebraic dimension $n$ are called Moishezon manifolds. Moishezon manifolds are known to 
be bi-meromorphic to projective manifolds \cite{Mo}. More generally there is the following 
classical result called the {\it algebraic reduction} theorem (see \cite{Ue}):

\begin{theorem}[{\cite{Ue}}]\label{thue}
Let $X$ be a compact connected complex manifold of algebraic dimension $a(X)\,=\,d$. Then there
exists a bi-meromorphic modification $$\Psi \,:\, \widetilde{X}\,\longrightarrow\, X\, ,$$ and
a holomorphic map $$t \,:\, \widetilde{X}\,\longrightarrow\, V$$ with connected fibers
onto a $d$-dimensional algebraic manifold $V$, such
that $t^* ({\mathcal M}(V))\,=\, \Psi^*({\mathcal M} (X))$.
\end{theorem}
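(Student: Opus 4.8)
The plan is to prove Theorem~\ref{thue}, the algebraic reduction theorem, by constructing the reduction directly from the field $\mathcal M(X)$ of meromorphic functions and then resolving the resulting meromorphic map. Since $X$ is compact, $\mathcal M(X)$ is a finitely generated field extension of $\mathbb C$ of transcendence degree $d = a(X)$; fix generators $f_1,\ldots,f_N$. These generators define a meromorphic map $F\colon X \dashrightarrow \mathbb P^N$, and I would take $V_0$ to be the (irreducible, projective, hence algebraic) closure of the image of $F$ in $\mathbb P^N$. A standard computation identifies $\mathcal M(V_0)$ with the subfield of $\mathcal M(X)$ generated by the $f_i$, which is all of $\mathcal M(X)$; in particular $\dim V_0 = d$.

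Next I would pass to a resolution of indeterminacy: by Hironaka's theorem (or, in the analytic category, the classical result that a meromorphic map from a manifold becomes holomorphic after a finite sequence of blow-ups) there is a bi-meromorphic modification $\Psi\colon X' \to X$ such that $F\circ\Psi$ lifts to a holomorphic map $X' \to V_0$. Blowing up does not change the meromorphic function field, so $\Psi^*\mathcal M(X) = \mathcal M(X')$, and the composed holomorphic map pulls $\mathcal M(V_0)$ back onto this field. To arrange $V$ to be a smooth algebraic manifold, compose with a resolution of singularities $V \to V_0$ and lift once more, which is harmless for the function fields.

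The remaining point — and the one I expect to be the genuine technical heart — is to arrange that the holomorphic map $t\colon \widetilde X \to V$ has \emph{connected fibers}. Starting from the map constructed above, apply the Stein factorization $\widetilde X \xrightarrow{t} V \xrightarrow{\pi} V_0$, where $t$ has connected fibers and $\pi$ is finite. One must check that this intermediate variety $V$ still satisfies $t^*\mathcal M(V) = \Psi^*\mathcal M(X)$: the inclusion $t^*\mathcal M(V) \subseteq \Psi^*\mathcal M(X)$ is clear, and for the reverse inclusion the key observation is that any meromorphic function on $\widetilde X$ is already a pullback from $V_0$ (by construction of $V_0$ from the full function field), hence constant on the connected fibers of $t$, hence descends to $V$. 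One must also verify that $V$, being finite over the algebraic variety $V_0$, is itself algebraic (Chow / GAGA for the finite cover, or Grauert--Remmert), and then resolve its singularities as before. The careful bookkeeping of these function-field identifications through the successive modifications — ensuring nothing is lost or gained at each blow-up and at the Stein factorization — is where the real work lies; the existence statements themselves are all standard.
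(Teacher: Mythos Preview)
The paper does not supply its own proof of Theorem~\ref{thue}; it merely quotes the statement as the classical algebraic reduction theorem and refers to Ueno~\cite{Ue} for the argument. There is consequently nothing in the paper to compare your proposal against. For what it is worth, your outline is essentially the standard proof one finds in that reference: realize $\mathcal M(X)$ via a finite set of generators as a meromorphic map to projective space, resolve indeterminacies by blowing up, apply Stein factorization to force connected fibers, and desingularize the target to obtain a smooth projective $V$. The bookkeeping you flag (that the intermediate space in the Stein factorization is again algebraic, and that the function-field identifications survive each modification) is exactly where the work sits, and your sketch handles it correctly.
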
 

In the statement of Theorem \ref{thue} and in the sequel, algebraic manifold means a smooth complex projective manifold.

Let $\pi \,:\, X \,\longrightarrow\, V$ be the meromorphic map given by $t \circ \Psi^{-1}$;
it is called the algebraic reduction of $X$.

\begin{theorem}\label{theorem: algebraic dimension}
Let $X$ be a compact, connected and simply connected complex manifold of complex dimension \(n\) and of algebraic 
dimension $d$. Suppose that $X$ admits a holomorphic rigid geometric structure $g$. Then $H^0(X, \, TX)$ admits an abelian 
subalgebra $A$ acting on $X$ preserving $g$ and
satisfying the condition that each generic fiber of the algebraic reduction of $X$ lies in some
orbit of $A$ (hence the dimension of $A$ is at least $n-d$). Moreover, $A$ is the Lie algebra
of the connected component, containing the identity element, of the automorphism group of the
rigid geometric structure $g'$ which is a juxtaposition of $g$ with a 
maximal family of commuting Killing fields of $g$.
\end{theorem}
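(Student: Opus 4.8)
The plan is to produce the abelian subalgebra $A$ by an inductive construction, at each stage enlarging a family of commuting Killing fields until its orbits sweep out the generic fibers of the algebraic reduction. I would start with the algebraic reduction $\pi\colon X\dashrightarrow V$ from Theorem~\ref{thue}, and work on the blown-up model $t\colon\widetilde X\to V$ when convenient; the generic fiber $F$ is a compact (in the modification) complex submanifold whose function field is trivial, i.e.\ $F$ carries no nonconstant meromorphic functions. The first key point is that, since $X$ is simply connected and compact and $g$ is rigid, the automorphism group $\mathrm{Aut}(X,g)$ has finitely many components, so it suffices to analyze its identity component $G^0$ and its Lie algebra $H^0(X,TX)\cap\{\text{Killing fields of }g\}$. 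If $G^0$ is noncompact, Lemma~\ref{isotropy} hands us a nontrivial local Killing field vanishing at a prescribed point; the strategy is to convert such \emph{local} Killing fields into \emph{global} holomorphic vector fields using the simple connectedness of $X$ together with the fact that local Killing fields of a rigid structure form a locally constant sheaf of Lie algebras — on a simply connected manifold this sheaf has global sections, so the Killing algebra is realized inside $H^0(X,TX)$.

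Next I would set up the inductive step. Suppose we have already found commuting Killing fields $Y_1,\dots,Y_r$ spanning an abelian $A_r\subset H^0(X,TX)$, and let $g_r$ be the ``juxtaposition'' geometric structure obtained by adjoining $Y_1,\dots,Y_r$ to $g$ (this is again a holomorphic rigid geometric structure, since appending finitely many holomorphic tensors to a rigid structure keeps it rigid and only raises the order). If the generic fiber $F$ of the algebraic reduction is \emph{not} already contained in an $A_r$-orbit, I want to show $\mathrm{Aut}(X,g_r)^0$ is noncompact, so that Lemma~\ref{isotropy} produces a new local — hence, by simple connectedness, global — Killing field $Y_{r+1}$ of $g_r$; being a Killing field of $g_r$ it commutes with each $Y_i$ and preserves $g$, so $A_{r+1}:=A_r\oplus\mathbb{C}Y_{r+1}$ is again abelian. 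The mechanism for noncompactness is the standard dichotomy: a compact group of automorphisms preserving a rigid structure on a compact manifold would force the $A_r$-orbit structure to be ``closed''/homogeneous along $F$, but the presence of a nonconstant-meromorphic-function-free fiber that is not an orbit means some automorphisms must escape every compact set. Concretely, if $\mathrm{Aut}(X,g_r)^0$ were compact, averaging would give an invariant Hermitian metric and the $A_r$-orbit foliation would be, on the generic fiber, a foliation by compact complex submanifolds; pushing this down to $V$ via $t$ and using $t^*\mathcal M(V)=\Psi^*\mathcal M(X)$, one forces either the orbit to fill $F$ (the desired conclusion) or to produce nonconstant meromorphic functions on $F$ — a contradiction. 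Iterating, the process terminates with an abelian $A$ whose orbits contain the generic fibers, hence $\dim A\ge n-d$.

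Finally, for the ``moreover'' clause, one checks that the terminal $A$ is exactly the Lie algebra of $\mathrm{Aut}(X,g')^0$ where $g'$ is the juxtaposition of $g$ with this maximal commuting family $A$. By construction $A$ consists of Killing fields of $g'$, so $A\subseteq\mathrm{Lie}(\mathrm{Aut}(X,g')^0)$. Conversely, any Killing field of $g'$ preserves $g$ and commutes with every element of $A$ by the very definition of $g'$; by maximality of the commuting family it must lie in $A$, giving equality. One should note that $\mathrm{Aut}(X,g')^0$ still has finitely many components because $X$ is simply connected and $g'$ is rigid, so its Lie algebra coincides with the space of global Killing fields of $g'$, which is $A$.

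The main obstacle, I expect, is the passage from local Killing fields to global holomorphic vector fields and, intertwined with it, making the ``compactness forces homogeneity along $F$'' argument precise. Simple connectedness is what makes the locally constant sheaf of Killing algebras globally trivializable, but one has to be careful that the global sections integrate to genuine global (not merely local) flows on the compact $X$ and that the resulting vector fields are holomorphic and commute — the commutation is automatic from being Killing fields of the juxtaposed structure, but controlling the orbits requires analyzing how an $A_r$-orbit sits inside a fiber $F$ with no meromorphic functions, and ruling out the intermediate case where the orbit is positive-dimensional but proper in $F$. That is where the interplay between the algebraic reduction's defining property $t^*\mathcal M(V)=\Psi^*\mathcal M(X)$ and the rigidity of $g_r$ does the real work.
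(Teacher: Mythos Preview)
Your inductive scheme has a genuine gap at the step where you invoke Lemma~\ref{isotropy}. That lemma, under noncompactness of the automorphism group of $g_r$, yields a nontrivial Killing field $Y$ of $g_r$ with $Y(x)=0$; but nothing prevents $Y$ from already lying in $A_r$. Indeed, if the full Killing algebra of $g_r$ happens to equal $A_r$, then the only output of Lemma~\ref{isotropy} is an element of $A_r$ vanishing at $x$, and your induction stalls without having enlarged the orbits. Your proposed exit from this situation --- the ``compactness dichotomy'' --- is not justified: you assert that if $\mathrm{Aut}(X,g_r)^0$ were compact and the $A_r$-orbits did not fill the generic fiber $F$, one would manufacture nonconstant meromorphic functions on $F$, but a foliation of $F$ by compact complex submanifolds (even one coming from a compact group action) does not in general produce any meromorphic functions, so this step does no work. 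In short, neither branch of your dichotomy actually forces the $A_r$-orbits to fill $F$ or produces a Killing field outside $A_r$.

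The paper bypasses all of this by importing a single external result (the main theorem of \cite{D1}): for any holomorphic rigid structure on a compact complex manifold, the local Killing algebra already acts with orbits containing the generic fibers of the algebraic reduction. Applied to $g$ together with Nomizu's extension theorem on the simply connected $X$, this immediately gives a (not yet abelian) global Killing algebra $\mathcal G\subset H^0(X,TX)$ whose orbits contain the generic fibers. Then the same result is applied a second time to the juxtaposition $g'=(g,\{X_i\})$ with $\{X_i\}$ a basis of $\mathcal G$: the Killing algebra of $g'$ consists precisely of the fields preserving $g$ and commuting with all of $\mathcal G$, i.e.\ the center $A$ of $\mathcal G$, and \cite{D1} guarantees that its orbits still contain the generic fibers. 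This two-pass ``take the center'' argument is what produces the abelian $A$ in one stroke; your iterative construction is trying to rediscover the content of \cite{D1} without the tools that paper uses.
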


Notice that in Theorem \ref{theorem: algebraic dimension} the generic fibers form an analytic 
open dense subset of $X$.

\begin{proof}
By the main theorem in \cite{D1} (see also~\cite{D2}) the Lie algebra of local holomorphic vector 
fields on $X$ preserving $g$ acts on $X$ with generic orbits containing the fibers of the 
algebraic reduction $\pi$ of $X$.

Since $X$ is simply connected, by a result due to Nomizu, \cite{No}, generalized first by Amores, 
\cite{Am}, and then by D'Ambra--Gromov, \cite[p.~73, 5.15]{DG}, local vector fields preserving 
$g$ extend to all of $X$. Thus we get a finite dimensional complex Lie algebra formed by 
holomorphic vector fields $X_i$ preserving $g$, which acts on $X$ with orbits containing the 
generic fibers of the algebraic reduction $\pi$. This finite dimensional complex Lie algebra of 
holomorphic vector fields will be denoted by $\mathcal G$.

Now put together $g$ and a family of global holomorphic vector fields $X_i$ spanning $\mathcal 
G$, to form another rigid holomorphic geometric structure $g'\,=\,(g,\,X_i)$; see \cite{DG} 
(Section 3.5.2 A) for details about the fact that the juxtaposition of a rigid geometric 
structure with another geometric structure is still a rigid geometric structure in the sense of 
Gromov. Considering $g'$ instead of $g$ and repeating the same argument as before, the complex 
Lie algebra $A$ of those holomorphic vector fields preserving $g'$ acts on $X$ with generic 
orbits containing the fibers of the algebraic reduction $\pi$. But preserving $g'$ means preserving
$g$ and commuting 
with the vector fields $X_i$. Hence $A$ coincides with the center of $\mathcal G$. In particular, 
$A$ is a complex abelian Lie algebra acting on $X$ preserving $g$ and with orbits containing the 
generic fibers of the algebraic reduction $\pi$.
\end{proof}

\subsection{Maximal algebraic dimension}

Assume that $X$ is a Moishezon manifold, so the algebraic dimension of
$X$ is $n\,=\, \dim_{\mathbb C}X$.

\begin{proposition}\label{prop1}
If $TX$ admits a holomorphic connection, then $X$ admits a finite unramified 
covering by a compact complex torus.
\end{proposition}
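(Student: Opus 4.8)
The plan is to combine the theorem of Atiyah on holomorphic connections with the Beauville--Bogomolov decomposition theorem. First I would recall that if $TX$ admits a holomorphic connection, then by a classical result of Atiyah~\cite{At} all rational Chern classes of $X$ vanish; in particular $c_1(X,\mathbb{Q})\,=\,0$. Since $X$ is Moishezon, a standard argument shows that a Moishezon manifold with vanishing first rational Chern class is in fact projective: indeed, Moishezon manifolds are known to be projective precisely when they are Kähler (by Moishezon's theorem, see~\cite{Mo}), and here one can appeal to the fact that for $X$ Moishezon the existence of a holomorphic connection on $TX$ forces projectivity --- alternatively, since the paper's context is the broader Theorem~\ref{rigid}, one reduces to the projective case via the bi-meromorphic model $\widetilde{X}$ and resolution of the indeterminacy, then uses that projectivity descends. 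So I would first argue that $X$ is projective.

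Second, with $X$ projective and $c_1(X,\mathbb{Q})=0$, Yau's theorem~\cite{Ya} gives a Ricci-flat Kähler metric on $X$, and the Beauville--Bogomolov theorem then provides a finite unramified cover $X' \to X$ which splits as a product of a complex torus, irreducible Calabi--Yau manifolds, and irreducible holomorphic symplectic manifolds. The key point is to rule out the Calabi--Yau and holomorphic symplectic factors. For this I would use that the property of $TX$ admitting a holomorphic connection is inherited by the finite cover $X'$ (pull back the connection), and then passes to the factors of the product decomposition: if $X' = Y_1 \times Y_2$, then $TX'$ restricted to a slice $Y_1 \times \{pt\}$ is $TY_1 \oplus \mathcal{O}^{\dim Y_2}$, and a holomorphic connection on this bundle induces one on $TY_1$ via the projection. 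Hence each irreducible factor has holomorphic tangent bundle admitting a holomorphic connection.

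Third, I would show an irreducible Calabi--Yau or holomorphic symplectic manifold $Y$ of positive dimension cannot have $TY$ admitting a holomorphic connection. The cleanest route is again via Atiyah: a holomorphic connection on $TY$ forces the existence of a flat holomorphic connection (since on a Kähler manifold the Atiyah class vanishing implies one can choose the connection with curvature of type $(1,1)$, and then Chern--Weil forces it to be flat after averaging, or one invokes that $c_i(Y,\mathbb{Q})=0$ for all $i$). But then $TY$ is a flat bundle given by a representation of $\pi_1(Y)$, whereas $\pi_1(Y)$ is finite for irreducible Calabi--Yau manifolds (and trivial for simply connected ones), which would make $TY$ virtually trivial --- contradicting that an irreducible Calabi--Yau of dimension $\geq 2$ has no nonzero holomorphic vector fields while $TY$ being flat of finite monodromy would produce, on a further finite cover, a trivial tangent bundle and hence $Y$ a torus. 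This contradicts irreducibility unless $\dim Y = 0$.

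The main obstacle I anticipate is the reduction to the projective (or Kähler) case: a priori a Moishezon manifold need not be Kähler, and $TX$ admitting a holomorphic connection does not obviously interact with Moishezon-ness. I would handle this by noting that the bi-meromorphic modification $\widetilde{X}$ from Theorem~\ref{thue} is projective when $a(X) = n$ (the algebraic reduction is then bi-meromorphic), pulling the holomorphic connection structure into line with the projective model, and then arguing that vanishing rational Chern classes plus projectivity is exactly the Calabi--Yau situation handled above; the conclusion that $X$ itself (not just a cover of a modification) is covered by a torus then follows because the torus quotient structure is a bi-meromorphic invariant among manifolds with nef/trivial canonical bundle, or more directly because the holomorphic connection on the original $X$ already forces $X$ to be smooth with trivial-up-to-finite-cover canonical bundle and no exceptional divisors, hence minimal, hence isomorphic (not just bi-meromorphic) to its projective model.
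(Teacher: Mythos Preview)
Your proposal has a genuine circularity in the reduction to the projective case. Atiyah's theorem \cite{At} that a holomorphic connection on a bundle forces vanishing of its rational Chern classes is a statement about compact \emph{K\"ahler} manifolds; you cannot invoke it on a Moishezon $X$ to obtain $c_1(X,\mathbb{Q})=0$ and then use that conclusion to argue $X$ is projective. Your last paragraph acknowledges this, but the suggested workarounds do not close the gap: a holomorphic connection on $TX$ does not pull back along a bi-meromorphic modification $\Psi:\widetilde{X}\to X$ (the pullback $\Psi^*TX$ is not $T\widetilde{X}$ over the exceptional locus), and the claim that the connection forces ``no exceptional divisors, hence minimal'' is exactly what needs proof.

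The missing idea is Lemma~\ref{lemma: curve}: one first shows directly that a holomorphic connection on $TX$ prohibits any nonconstant map $\mathbb{CP}^1\to X$ (pull back the connection, get a flat bundle with trivial monodromy, and then $df=0$ for degree reasons). A Moishezon manifold containing no rational curve is projective \cite{Ca}; \emph{this} is how the paper obtains projectivity, after which Atiyah applies legitimately and gives $c_i(X,\mathbb{Q})=0$ for all $i$. From there the paper finishes in one step: $c_1=c_2=0$ on a projective manifold yields, via Yau's Ricci-flat metric, a flat K\"ahler metric and hence a finite torus cover. Your route through the Beauville--Bogomolov decomposition would also work at that stage, but it is longer than necessary, and your step~3 argument (holomorphic connection $\Rightarrow$ flat holomorphic connection) is not quite correct as stated---Atiyah gives vanishing Chern classes, not directly a flat connection; the flatness comes from the metric side via $c_2=0$.
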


The first step of the proof of Proposition \ref{prop1} is the following:

\begin{lemma}\label{lemma: curve}
Let $X$ be a complex manifold endowed with an affine holomorphic
connection. Then there is no nonconstant holomorphic map from ${\mathbb C}{\mathbb P}^1$ to $X$.
\end{lemma}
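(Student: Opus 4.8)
The plan is to argue by contradiction. Suppose $f \colon \mathbb{CP}^1 \to X$ is a nonconstant holomorphic map, and let $\nabla$ be the given holomorphic affine connection on $TX$. The key idea is that $f$ pulls back $\nabla$ to a holomorphic affine connection on a holomorphic vector bundle over $\mathbb{CP}^1$, and such connections cannot exist on bundles with nonzero degree — so one extracts a contradiction from the degrees of the relevant bundles.

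More precisely, first I would consider the pullback $f^*TX$ and the pulled-back connection $f^*\nabla$, which is a holomorphic connection on $f^*TX$ over $\mathbb{CP}^1$. A holomorphic vector bundle on a compact Riemann surface admits a holomorphic connection if and only if each of its indecomposable direct summands has degree zero (this is Weil's theorem, or Atiyah's criterion); in particular $f^*TX$ must have degree zero. Next, consider the differential $df \colon T\mathbb{CP}^1 \to f^*TX$, which is a nonzero holomorphic bundle map since $f$ is nonconstant. This gives a nonzero holomorphic section of $(T\mathbb{CP}^1)^* \otimes f^*TX = f^*TX \otimes \mathcal{O}(-2)$. The existence of such a section, together with $\deg(f^*TX) = 0$, forces the summand of $f^*TX$ receiving the image of $df$ to have degree at least $2$; one then needs to see this contradicts the degree-zero constraint on indecomposable summands.

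To make the last step clean, I would instead argue directly via the subsheaf generated by the image of $df$: the saturation $L \subset f^*TX$ of the image of $df$ is a line subbundle with $\deg L \geq 2$ (since it contains the image of an injection from $\mathcal{O}(2) = T\mathbb{CP}^1$ away from the ramification, hence $\deg L \geq \deg T\mathbb{CP}^1 = 2$). Then $f^*TX$ contains a line subbundle of positive degree, so by the structure of the Harder--Narasimhan / Birkhoff--Grothendieck decomposition $f^*TX \cong \bigoplus_i \mathcal{O}(a_i)$ it has a summand $\mathcal{O}(a_i)$ with $a_i > 0$. But a holomorphic connection on $f^*TX$ restricts the possible summands: each $\mathcal{O}(a_i)$ must have $a_i = 0$ by Atiyah's vanishing criterion applied summand-by-summand (the Atiyah class of $\mathcal{O}(a)$ over $\mathbb{CP}^1$ is $a$ times the generator of $H^1(\mathbb{CP}^1, \Omega^1)$, nonzero for $a \neq 0$). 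This contradiction proves the lemma.

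The main obstacle I anticipate is being careful about the two distinct ingredients and how they interact: (i) establishing that a holomorphic connection on a bundle over $\mathbb{CP}^1$ forces all Birkhoff--Grothendieck summands to be trivial (a clean citation to Atiyah suffices), and (ii) producing a positive-degree line subbundle from the nonvanishing of $df$ while correctly accounting for ramification points of $f$ — the naive bound $\deg(\operatorname{im} df) = \deg T\mathbb{CP}^1$ only holds after saturating, and one must check the saturation is genuinely a subbundle and genuinely has degree $\geq 2$. Once these two facts are in hand the contradiction is immediate, so the proof is short but hinges on handling the saturation correctly.
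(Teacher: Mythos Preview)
Your proposal is correct and follows the same overall structure as the paper's proof: pull back the connection to $f^*TX$, deduce that $f^*TX$ is trivial, and then observe that there is no nonzero holomorphic map from $T\mathbb{CP}^1 \cong \mathcal{O}(2)$ into a trivial bundle, so $df=0$.

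The difference lies in how triviality of $f^*TX$ is obtained. The paper argues more elementarily: since $\dim_{\mathbb C}\mathbb{CP}^1=1$, the pulled-back connection $f^*\nabla$ is automatically flat, and $\mathbb{CP}^1$ being simply connected forces the monodromy to be trivial, hence $f^*TX$ is holomorphically trivial. You instead invoke the Weil--Atiyah criterion together with the Birkhoff--Grothendieck splitting $f^*TX\cong\bigoplus_i\mathcal{O}(a_i)$ to conclude all $a_i=0$. Both routes are valid; the paper's avoids citing Atiyah's theorem and is slightly shorter. Note also that once you know every $a_i=0$ (which follows immediately from Weil--Atiyah since each $\mathcal{O}(a_i)$ is indecomposable), your saturation argument becomes superfluous: the mere nonvanishing of $df\colon\mathcal{O}(2)\to\mathcal{O}^{\oplus n}$ is already impossible for degree reasons, with no need to extract a positive-degree line subbundle first.
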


\begin{proof}
Let $\nabla$ be a holomorphic connection on $X$. Let
$$
f\, :\, {\mathbb C}{\mathbb P}^1\, \longrightarrow\, X
$$
be a holomorphic map. Consider the pulled back connection $f^*\nabla$ on
$f^*TX$. Since 
$\dim_{\mathbb C} {\mathbb C}{\mathbb P}^1\,=\, 1$, the connection $f^*\nabla$ is flat.
Moreover, ${\mathbb C}{\mathbb P}^1$ being simply connected, $f^*\nabla$ has trivial monodromy,
implying that the holomorphic vector bundle $f^*TX$ is trivial.

Now consider the differential of $f$
$$
df\, :\, T{\mathbb C}{\mathbb P}^1\,\longrightarrow\, f^*TX\, .
$$
There is no nonzero holomorphic homomorphism from $T{\mathbb C}{\mathbb P}^1$ to the trivial holomorphic
line bundle, because $2\,=\, \text{degree}(T{\mathbb C}{\mathbb P}^1)\,> \,
\text{degree}({\mathcal O}_{{\mathbb C}{\mathbb P}^1})\,=\,0$. This implies that $df\,=\, 0$. Therefore,
$f$ is a constant map.
\end{proof}

\begin{proof}[{Proof of Proposition \ref{prop1}}]
Since the Moishezon manifold $X$ does not admit any nonconstant holomorphic map from
${\mathbb C}{\mathbb P}^1$, it is a complex projective manifold \cite[p. 307, Theorem
3.1]{Ca}.

As $TX$ admits a holomorphic connection, and $X$ is a complex projective manifold,
we have $c_i(X, \mathbb{Q})\,=\, 0$ for all $i\, >\, 0$
\cite[p. 192--193, Theorem 4]{At}, where $c_i(X,\mathbb{Q})$ denotes the $i$--th Chern class of $TX$ with
rational coefficients. Therefore, $X$ being complex projective, from Yau's theorem
proving Calabi's
conjecture, \cite{Ya}, it follows that $X$ admits a finite unramified 
covering by a compact complex torus (see also \cite[p. 759, Theorem 1]{Be} and \cite{IKO}).
\end{proof}

\section{Rigid geometric structures and fundamental group} \label{simply connected}

In this section we prove the two main results mentioned in the introduction.

Let us first address the easy case where the geometric structure is taken to be locally 
homogeneous.

\begin{proposition} 
Let $X$ be a compact complex manifold with trivial canonical bundle. If $X$ is endowed with a locally homogeneous 
holomorphic rigid geometric structure $g$, then the fundamental group of $X$ is infinite.
\end{proposition}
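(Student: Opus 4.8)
The plan is to exploit local homogeneity to produce a transitive action, and then use the triviality (up to finite cover) of the canonical bundle to rule out simple connectedness. First I would argue by contradiction: assume $\pi_1(X)$ is finite. Passing to the universal cover $\widetilde{X}$, which is then a compact complex manifold with trivial canonical bundle carrying a locally homogeneous holomorphic rigid geometric structure, I may assume $X$ itself is simply connected. By the Nomizu--Amores--D'Ambra--Gromov extension theorem for local Killing fields on simply connected manifolds (as invoked in the proof of Theorem~\ref{theorem: algebraic dimension}), the local Killing fields of $g$ extend to global holomorphic vector fields on $X$. Local homogeneity means the Killing algebra acts transitively on $X$, so the global holomorphic vector fields span the tangent space $T_xX$ at every point $x\in X$; in other words, $TX$ is generated by its global sections.

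The key step is then the following. Since $X$ is simply connected and compact with trivial canonical bundle $K_X\,=\,\mathcal{O}_X$, the holomorphic volume form $\omega$ is nowhere vanishing, so $\Lambda^n TX\,\cong\,\mathcal{O}_X$. Contracting global holomorphic vector fields against $\omega$ produces global holomorphic $(n-1)$-forms, and more importantly the global vector fields define a map $\mathcal{O}_X^{\oplus N}\,\longrightarrow\,TX$ which is surjective (by transitivity). Taking top exterior powers, a suitable choice of $n$ of these global vector fields $X_{i_1},\ldots,X_{i_n}$ must have $\omega(X_{i_1},\ldots,X_{i_n})$ a nonzero holomorphic function on $X$, hence a nonzero constant; this forces these $n$ vector fields to be everywhere linearly independent, so $TX$ is in fact holomorphically trivial. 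Thus $X$ is a parallelizable manifold, hence biholomorphic to $G/\Gamma$ for a complex Lie group $G$ and a cocompact lattice $\Gamma$ \cite{Wa}. But a cocompact lattice $\Gamma$ in a positive-dimensional complex Lie group is infinite (indeed $G$ is noncompact since it is a complex Lie group of positive dimension, as $n\geq 1$), contradicting $\pi_1(X)\,=\,\pi_1(G/\Gamma)$ being finite --- recall $\widetilde{G}$ is contractible when $G$ is... more carefully, one notes $\Gamma$ surjects onto a finite-index subgroup of $\pi_1(G/\Gamma)$, and $\Gamma$ being an infinite group gives the contradiction. (If $n\,=\,0$ the statement is vacuous.)

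The main obstacle is the passage from ``$TX$ is globally generated'' to ``$TX$ is trivial'': a priori global generation on a non-K\"ahler compact complex manifold does not force triviality of the bundle. The triviality of $K_X$ is exactly what saves this: the constancy of the nonvanishing holomorphic function $\omega(X_{i_1},\ldots,X_{i_n})$ (using compactness and connectedness of $X$, so that holomorphic functions are constant) is what upgrades generic linear independence of a well-chosen $n$-tuple of global sections to everywhere linear independence. One must be slightly careful that such an $n$-tuple with nonvanishing top wedge exists: at a fixed point $x_0$ the global sections span $T_{x_0}X$, so some $n$-tuple is a basis there, giving $\omega(X_{i_1},\ldots,X_{i_n})(x_0)\neq 0$; then this function is a nonzero constant everywhere. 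Once triviality of $TX$ is in hand, the identification with $G/\Gamma$ and the infiniteness of cocompact lattices is standard and closes the argument.
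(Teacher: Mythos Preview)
Your proposal is correct and follows essentially the same route as the paper: pass to the universal cover, invoke the Nomizu--Amores--Gromov extension to globalize Killing fields, use local homogeneity together with the trivial canonical bundle (via constancy of the holomorphic function $\omega(X_{i_1},\ldots,X_{i_n})$) to conclude that $TX$ is holomorphically trivial, and then apply Wang's theorem to reach a contradiction. The only cosmetic difference is that the paper first states generic spanning and then upgrades, whereas you note spanning at every point from the outset; your closing remarks on why a cocompact lattice in a positive-dimensional complex Lie group must be infinite are a bit meandering but ultimately sound.
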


\begin{proof}
Assume, by contradiction, that the fundamental group of $X$ is finite. So replacing $X$ by its universal cover
we may assume that $X$ is simply connected. Since $g$ is locally homogeneous, and 
local Killing fields extend to all of $X$ by Nomizu's theorem \cite{No, Am,Gr}, 
it follows that $TX$ is generically spanned by globally defined holomorphic Killing vector 
fields. Let $\{X_1,\, \cdots,\, X_n\}$ a family of linearly independent holomorphic vector fields on $X$ which span $TX$ at the 
generic point. Consider a nontrivial holomorphic section $vol$ of the canonical line bundle, and evaluate it on $X_1\wedge 
\cdots\wedge X_n$ to get a holomorphic function $vol (X_1\wedge 
\cdots\wedge X_n)$. This holomorphic function on $X$ is constant and 
nonzero at the generic point, hence it is nowhere zero. This immediately implies that $\{X_1,\,
\cdots,\, X_n\}$ span $TX$ at
every point in $X$. Consequently, $TX$ 
admits a holomorphic trivialization and hence, by Wang's theorem \cite{Wa}, this $X$ is a quotient of a 
connected complex Lie group by a lattice in it. In particular, the fundamental group of $X$ is
infinite: a contradiction.
\end{proof}

\begin{theorem}\label{main thm}
Let $X$ be a compact complex manifold admitting a holomorphic Riemannian metric $g$. Then the fundamental group of $X$ is 
infinite.
\end{theorem}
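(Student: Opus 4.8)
The plan is to argue by contradiction: assume $\pi_1(X)$ is finite, pass to the universal cover, and thereby reduce to the case where $X$ is compact, simply connected, and carries a holomorphic Riemannian metric $g$. Recall that a holomorphic Riemannian metric forces the canonical bundle to be of order two, so on the simply connected $X$ it is actually trivial; moreover $g$ is a rigid holomorphic geometric structure of affine type preserving the smooth volume $(\sqrt{-1})^n\,\omega_g\wedge\overline{\omega_g}$. Let $d\,=\,a(X)$ be the algebraic dimension. Applying Theorem~\ref{theorem: algebraic dimension}, I obtain an abelian subalgebra $A\,\subset\, H^0(X,\,TX)$ of holomorphic Killing fields of $g$ whose orbits contain the generic fibers of the algebraic reduction $\pi\,:\,X\,\dashrightarrow\, V$, so $\dim_{\mathbb C} A\,\geq\, n-d$; furthermore $A$ is the Lie algebra of the identity component of the automorphism group of the enriched rigid structure $g'\,=\,(g,\,X_i)$.

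Next I would split according to the value of $d$. When $d\,=\,n$, $X$ is Moishezon; since $g$ is a holomorphic Riemannian metric its Levi--Civita connection is a holomorphic affine connection on $TX$, so Proposition~\ref{prop1} applies and $X$ is finitely covered by a complex torus, which has infinite fundamental group --- contradicting simple connectedness. So I may assume $d\,<\,n$, whence $\dim_{\mathbb C} A\,\geq\, 1$ and $X$ carries nontrivial holomorphic Killing fields. The strategy is now to promote the abelian action of $A$ to a global trivialization of $TX$. If the $A$-orbits already span $TX$ generically, the determinant-of-vector-fields argument (evaluating a nowhere-vanishing section of the trivial canonical bundle on an $n$-tuple of Killing fields, as in the preceding Proposition) shows $TX$ is holomorphically trivial, so by Wang's theorem $X\,=\,G/\Gamma$ has infinite $\pi_1$ --- a contradiction. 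The substantive case is when the generic $A$-orbit has dimension $m$ strictly between $0$ and $n$; here the orbit foliation has leaves of dimension $m\,\geq\, n-d$ and, crucially, at a generic point the orbit contains the fiber of the algebraic reduction.

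The main obstacle, and the heart of the argument, is to rule out this intermediate case. The plan is to combine two ingredients. First, Lemma~\ref{isotropy}: if the full automorphism group $G$ of $(X,\,g)$ were noncompact then at every point there is a nontrivial local Killing field vanishing there, i.e. an isotropy direction, so the Killing algebra acts with orbits of dimension strictly less than its own dimension everywhere --- one then plays this isotropy against the transitivity along algebraic-reduction fibers to descend extra structure (a holomorphic Riemannian-type structure, or at least a rigid structure with trivial canonical bundle) to the base $V$, which is projective, and invoke the known K\"ahler/projective case (Conjecture~\ref{con2} verified for Calabi--Yau manifolds, \cite{D2}) to force local homogeneity and hence, via the trivialization argument, a contradiction with simple connectedness. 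Second, if instead $G$ is compact, then on the simply connected $X$ the identity component $G^0$ is a compact complex Lie group, hence a complex torus acting holomorphically and nontrivially (since $\dim A\,\geq\,1$) on $X$; a nontrivial holomorphic action of a positive-dimensional compact complex torus on a compact complex manifold again produces, by averaging or by examining orbit closures, enough structure to trivialize $TX$ along the orbits and ultimately contradicts $\pi_1(X)$ being finite. I expect the delicate point to be the descent step: transferring the rigid geometric structure $g'$ (modulo the foliation by $A$-orbits, which contains the algebraic-reduction fibers) down to the projective base $V$ so that the already-established projective case of the conjecture can be applied, and then lifting the resulting local homogeneity back up to conclude that $TX$ is globally trivial.
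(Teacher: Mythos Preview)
Your setup is fine: passing to the universal cover, disposing of the Moishezon case via Proposition~\ref{prop1}, and invoking Theorem~\ref{theorem: algebraic dimension} to obtain the abelian Killing algebra $A$ with $\dim A\geq n-d>0$ are exactly what the paper does. The gap is in the ``intermediate case'', where your plan diverges from the actual argument and does not go through as stated.

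The key structural idea you are missing is the dichotomy on the \emph{Gram matrix} of $g$ restricted to $A$. Since each $g(X_i,X_j)$ is a holomorphic function on the compact $X$, it is a constant; so $g|_A$ is a fixed complex bilinear form on the vector space $A$. The proof splits on whether this form is degenerate or not. If it is degenerate, pick $X_i\in A$ with $g(X_i,X_j)=0$ for all $j$; then the holomorphic $1$-form $\omega_i:=g(X_i,\cdot)$ vanishes along the generic fibers of the algebraic reduction, and one shows (via Hartogs and Enoki's lemma) that $\Psi^*\omega_i$ is the pullback $t^*\widetilde{\omega_i}$ of a holomorphic $1$-form on the projective base $V$. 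Hence $\omega_i$ is closed, and on the simply connected $X$ it is exact, so identically zero, forcing $X_i=0$ --- a contradiction. Your proposal never produces this $1$-form or invokes its descent; the vague plan to ``descend $g'$ to $V$ and cite the K\"ahler case of the conjecture'' is not what is needed, and would in any case require $V$ to have trivial canonical bundle and to inherit a holomorphic Riemannian metric, neither of which is available.

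In the nondegenerate case, the constancy of the Gram matrix implies that the evaluation map $A\to T_xX$ is injective at \emph{every} point, so no nonzero element of $A$ vanishes anywhere. Now apply Lemma~\ref{isotropy} to the rigid structure $g'$ (not to $g$): its Killing algebra is exactly $A$, and since $A$ has no isotropy the contrapositive of the lemma forces the automorphism group of $g'$ to be compact, hence a complex torus $T$. One then checks the $T$-action is free (using $T_{x_0}X=Ax_0\oplus (Ax_0)^\perp$ and that $df(x_0)$ acts trivially on both summands), so $X\to N:=X/T$ is a principal $T$-bundle; the restriction of $g$ to $A^\perp$ descends to a holomorphic Riemannian metric on the simply connected $N$ of strictly smaller dimension, and one finishes by induction. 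Your treatment of the compact case (``averaging or examining orbit closures \ldots\ to trivialize $TX$'') does not reach this quotient-and-induct step, and the noncompact branch of your dichotomy is in fact empty once the nondegeneracy of $g|_A$ is in hand.
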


\begin{proof}
Assume, by contradiction, that $X$ is endowed with a holomorphic Riemannian metric $g$ and has finite 
fundamental group. Replacing $X$ by its universal cover we can assume that $X$ is simply connected. Denote also by $g$ 
the symmetric bilinear form associated to the quadratic form $g$. The holomorphic tangent bundle $TX$ is endowed with the 
(holomorphic) Levi-Civita connection associated to $g$. If $X$ is a Moishezon manifold, Proposition \ref{prop1} shows that $X$ admits a finite 
unramified cover which is a complex torus: a contradiction (under the assumption that $X$ is Moishezon).

Now consider the situation where the algebraic dimension $d$ of $M$ is strictly less than the complex dimension $n$ of
$M$. Consequently the algebraic reduction of $M$ admits fibers of positive dimension $n-d$ (see Theorem \ref{thue}).

By Theorem \ref{theorem: algebraic dimension}, there exists a finite dimensional abelian Lie 
algebra $A$ lying inside the Lie algebra of global holomorphic vector fields on $X$ such that
\begin{itemize}
\item $A$ preserves $g$, and

\item the generic fibers of the algebraic reduction are contained in the leaves of the foliation 
generated by $A$.
\end{itemize}
Let $X_1,\, X_2, \,\cdots,\, X_{k} \,\in\, H^0(X,\, TX)$ be a basis of $A$;
so we have $k \,\geq\, n-d\, >\, 0$.

For all $i,\,j \,\in\, \{1,\, \cdots,\, k \}$, the functions $g(X_i,\, X_j)$ on $X$ are holomorphic and hence
constant.

Assume first that there exists $X_i \,\in\, A$ as above such that the dual one-form $\omega_i$ defined by
$$\omega_i (v) \,:=\,g(X_i,\, v)$$ vanishes on all tangent vectors tangent to the orbits of $A$ (equivalently,
$g(X_i,\, X_j)\,=\, 0$, for all $j \,\in\, \{1,\, \cdots, \,k \}$). This implies that $\omega_i$ vanishes on the generic
fibers of the algebraic reduction $$\pi \,:\, X \,\longrightarrow\, V\, .$$ We observe that $\Psi^*(\omega_i)$ (see Theorem \ref{thue} for
the map $\Psi$) defines a holomorphic one-form on $\widetilde{X}$. To see that $\Psi^*(\omega_i)$ is a holomorphic one-form on
entire $\widetilde{X}$, first note that the meromorphic map $\Psi$ is holomorphic away from the indeterminacy set $S$ which is of
complex codimension at least two in $\widetilde{X}$. Consequently, $\Psi^*(\omega_i)$ is a holomorphic one-form on the
complement $\widetilde{X}\setminus S$, and hence by Hartog's theorem can be uniquely extended to a holomorphic one-form on
entire $\widetilde{X}$ (for more details the reader is referred to Proposition 1.2 in pages 282--283 of \cite{Ue1}).

Since $\Psi^*(\omega_i)$ vanishes on the generic fibers of $t$ (and hence on all fibers), which are compact and connected, it can be
shown that there is a holomorphic one-form $\widetilde{\omega_i}$ defined on the compact complex projective manifold
$V$ such that $\Psi^*(\omega_i)$ is the pull-back
$t^*(\widetilde{\omega_i})$. To prove this, let us first define the one-form $\widetilde{\omega_i}$ on
the complement $V \setminus S_0$, where $S_0$
is the set of critical values of $t$. Take any $v \,\in\, V \setminus S_0$ and $w \,\in\, T_v V$, and define
$$\widetilde{\omega_i}(v) \cdot w \,:=\, \Psi^*(\omega_i)({\widetilde v}) \cdot \widetilde{w}\, ,$$ where ${\widetilde v} \,\in\,
t^{-1}(v)$ is any element of the (regular) fiber above $v$ and $\widetilde{w} \,\in\, T_{\widetilde v} \widetilde X$ is such that
$dt(\widetilde{v}) \cdot \widetilde{w}\,=\,w$. It should be clarified that this definition does not depend on the choice of $\widetilde w$
because $\Psi^*(\omega_i)$ vanishes on the fiber $t^{-1}(v)$. Moreover this definition does not depend either
on the choice of ${\widetilde v} \,\in\, t^{-1}(v)$, since $t^{-1}(v)$ being a compact and connected manifold, any holomorphic map
from $t^{-1}(v)$ to $(T_vV)^*$ must be a constant one. This construction furnishes a holomorphic one-form $\widetilde{\omega_i}$ on
the complement $V \setminus S_0$ such that $\Psi^*(\omega_i)\,=\,t^*(\widetilde{\omega_i})$ on $t^{-1} (V \setminus S_0)$.

Finally, in order to extend $\widetilde{\omega_i}$ to all of $V$ we make use of the following Lemma 3.3 in \cite{En} (page 57).

\begin{lemma}[{\cite[Lemma 3,3]{En}}]\label{Enoki}
Let $t \,:\, {\widetilde X}\,\longrightarrow\, V$ be a holomorphic mapping with connected fibers between compact complex manifolds
${\widetilde X}$ and
$V$. Let $S_0\,\subset\, V$ be the set of all critical values of $t$. Assume that $V$ is K\"ahler. Then
$$H^0( {\widetilde X}, \, \Omega^1_{\widetilde X}) \cap H^0({\widetilde X} \setminus t^{-1}(S_0), \, t^*\Omega^1_V)
\,\subset\, t^*H^0(V,\, \Omega^1_V)\, .$$
\end{lemma}

Notice that Lemma \ref{Enoki} directly applies to our situation since we already proved that $$\Psi^*(\omega_i)\,\in\, H^0( {\widetilde X}, 
\, \Omega^1_{\widetilde X}) \cap H^0({\widetilde X} \setminus t^{-1}(S_0),\, t^*\Omega^1_V)\, .$$ Hence there exists a holomorphic one-form 
$\widetilde{\omega_i}$ on $V$ such that $\Psi^*(\omega_i)\,=\,t^*(\widetilde{\omega_i})$.

Holomorphic forms on algebraic manifolds
being closed, it follows that $$d \Psi^*(\omega_i)\,=\,d \widetilde{\omega_i}\,=\,0\, .$$ Consequently, we have
$d \omega_i\,=\,0$.

Since $X$ is simply connected, the closed form $\omega_i$ must be exact. This implies that $\omega_i$ vanishes identically, and hence
the vector field $X_i$ vanishes identically: a contradiction.

Thus we are left with the case where $g$ restricted to $A$ is nondegenerate. So assume that
$g$ restricted to $A$ is nondegenerate. This immediately implies that the
vector fields in $A$ do not vanish at any point of $X$. Consequently, the foliation generated by $A$ is 
nonsingular and is of complex dimension $k$. Now Lemma \ref{isotropy} implies that the corresponding 
connected Lie group $G$, meaning the connected component, containing the identity element, of the automorphism group of the holomorphic 
rigid geometric structure $g'\,=\, (g,\, X_1,\, \cdots,\, X_{k})$, is compact. Consequently, $G$ must be isomorphic to a 
compact complex torus $T$ of dimension $n-d$.

The action of $G\,=\,T$ on $X$ is locally free. We will show that this action must be free. For this, assume that an
element $f \,\in 
\,G$ fixes $x_0 \,\in\, X$. Since $G$ is abelian, the differential $df(x_0)$ at $x_0$ acts trivially on $Ax_0$, where
$Ax_0 \,\subset\, T_{x_0}X$ denotes the infinitesimal orbit of $A$ (meaning the evaluation of the Lie algebra of vector fields $A$ at $x_0$). It must preserve its 
$g$-orthogonal part $(Ax_0)^{\perp}$. Moreover, since $f$ preserves each orbit of $G$, the action of $df(x_0)$ must also be trivial on 
$(Ax_0)^{\perp}$. Recall that $g$ restricted to $A$ is nondegenerate, which implies that $$Ax_0 \oplus (Ax_0)^{\perp} \,=\,T_{x_0}X\, .$$
It now follows that $df(x_0)$ is the identity map, and so $f$ must be trivial: it is the identity element in $G$.

The action of $G$ being free, $X$ is a holomorphic principal $G$--bundle over the compact complex manifold $N\,:=\,
X/G$. Since $X$ is simply connected and $G$ is connected, it follows that $N$ is simply connected.
The action of $G$ on $X$ preserves $A$ and $g$, hence the 
restriction of $g$ to $A^{\perp}$ defines a transverse holomorphic Riemannian metric transverse to the foliation defined by the 
$G$-action. This transverse holomorphic Riemannian metric descends to a holomorphic Riemannian metric on $N$. But the 
complex dimension of $N$ is strictly less then the complex dimension of $X$. Now an induction on the complex 
dimension of $X$ finishes the proof; note that the only compact Riemann surfaces admitting holomorphic Riemannian metrics are elliptic curves
and they have infinite fundamental group.
\end{proof}

Recall that it was proved in \cite[Proposition 4.4]{BD} that {\it compact complex manifolds with 
trivial canonical bundle and algebraic dimension zero admitting holomorphic rigid geometric 
structures have infinite fundamental group.}

We prove here the following:

\begin{theorem} \label{rigid}
Let $X$ be a compact complex manifold with trivial canonical bundle and its algebraic dimension is one. If $X$ admits a holomorphic
rigid geometric structure, then the fundamental group of $X$ is infinite.
\end{theorem}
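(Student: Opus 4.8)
The plan is to run the strategy of Theorem \ref{main thm}, substituting for the features specific to holomorphic Riemannian metrics some general facts valid for rigid structures on compact manifolds with trivial canonical bundle. Assume $\pi_1(X)$ is finite and pass to the universal cover, so that $X$ is simply connected; its canonical bundle stays trivial, and since the function field of a finite \'etale cover is algebraic over that of the base the algebraic dimension stays equal to one. In particular $X$ is not Moishezon once $\dim_{\mathbb C}X\ge2$ (when $\dim_{\mathbb C}X=1$ the manifold $X$ is an elliptic curve and we are done). By Theorem \ref{theorem: algebraic dimension} there is an abelian subalgebra $A\subset H^0(X,TX)$ preserving $g$, with $\dim A\ge n-1$, whose generic orbits contain the generic fibres of the algebraic reduction $\pi\colon X\dashrightarrow V$, where $V$ is a smooth projective curve.

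First I would dispose of the case where the generic $A$--orbit is $n$--dimensional: then finitely many of the $X_i\in A$ are generically linearly independent, and evaluating a nowhere vanishing holomorphic $n$--form $\mathrm{vol}$ (which exists because $K_X$ is trivial) on their wedge gives a holomorphic, hence constant, hence nowhere vanishing function; those fields then trivialise $TX$, and by Wang's theorem $X$ is a quotient of a connected complex Lie group by a lattice, contradicting simple connectedness as in the locally homogeneous case. Hence the generic $A$--orbit is the generic fibre $F$ of $\pi$, of dimension $n-1\ge1$. If $V$ has positive genus, a nonzero holomorphic one--form on $V$ pulls back, through $\Psi$ and $t$ and using Hartogs' theorem across the codimension $\ge2$ indeterminacy locus together with the isomorphism $\Psi^*$ on holomorphic one--forms (as in the proof of Theorem \ref{main thm}), to a nonzero holomorphic one--form on $X$; it is closed, being a pullback from a curve, hence exact and therefore zero --- a contradiction. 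So $V\cong\mathbb{CP}^1$, and $F$, a compact connected complex manifold homogeneous under the connected abelian complex Lie group $\mathcal A$ generated by $A$, is a complex torus of dimension $n-1$.

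Next I would show $\dim A=n-1$. The isotropy subspaces $\ker(A\to T_xX)$ are constant along the orbits of the abelian group $\mathcal A$, so they determine a morphism $\mathbb{CP}^1\to\mathrm{Gr}(\dim A-n+1,\,A)$; if it is constant its value is a nonzero subspace of $A$ whose flows act trivially on $X$, which is absurd, and if it is nonconstant then the pullback of the tautological subbundle is a nontrivial subbundle of a trivial bundle over $\mathbb{CP}^1$, hence of negative degree, and a determinant comparison of the resulting action map against the relative anticanonical bundle of $t$ produces a nowhere vanishing section of a line bundle with no nonzero global sections --- also absurd. Thus $\dim A=n-1$. Each $X_i$ has constant $\mathrm{vol}$--divergence, which must vanish because the complex one--parameter flow preserves $\int_X(\sqrt{-1})^n\,\mathrm{vol}\wedge\overline{\mathrm{vol}}$; hence $\mathcal A$, and then the whole automorphism group of $g'=(g,X_1,\dots,X_{n-1})$ (it has finitely many components and acts on $\mathbb C\cdot\mathrm{vol}$ by a finite order character), preserves this smooth volume. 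By Lemma \ref{isotropy} this group is compact: were it noncompact there would be a nonzero Killing field of $g'$ --- equivalently, by Nomizu's theorem and simple connectedness, a nonzero element of $A$ --- vanishing at a generic point, contradicting that the generic orbit has dimension $\dim A$. So $\mathcal A$ is a complex torus $T$ of dimension $n-1$.

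The final point is freeness of the $T$--action, which I would establish as follows. No nonzero $Y\in A$ vanishes anywhere: at a zero $x_0$ of $Y$ the flow stays in the compact group $T$, so the linear part $L$ of $Y$ at $x_0$ lies in a compact subgroup of $\mathrm{GL}(T_{x_0}X)$ while $\exp(tL)$ is bounded for all $t\in\mathbb C$, which forces $L=0$; then $Y$ has vanishing $1$--jet at $x_0$, the higher jets of its flow at $x_0$ lie in a unipotent group with compact image, hence are trivial, and Gromov rigidity gives $Y\equiv0$. So all $T$--orbits are $(n-1)$--dimensional; and if $t_0\in T$ fixes a point $x_0$, then its differential there fixes the orbit pointwise ($T$ is abelian), has determinant one ($T$ preserves $\mathrm{vol}$) and is semisimple ($T$ is compact), hence is the identity, whence the jet of $t_0$ at $x_0$ is trivial and, by rigidity, $t_0$ lies in the finite ineffective kernel of the action. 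Dividing by that kernel, $T$ acts freely, so $X\to X/T$ is a holomorphic principal $T$--bundle over a compact curve bimeromorphic to $V\cong\mathbb{CP}^1$, hence over $\mathbb{CP}^1$; its homotopy exact sequence $\pi_2(\mathbb{CP}^1)\to\pi_1(T)\to\pi_1(X)\to0$ reads $\mathbb Z\to\mathbb Z^{2(n-1)}\to\pi_1(X)\to0$, so $\pi_1(X)$ has a quotient of positive rank and hence is infinite, a contradiction. I expect the main obstacle to be exactly this last passage, upgrading a locally free to a free action of a compact torus, where the volume invariance, the compactness coming from Lemma \ref{isotropy}, and Gromov rigidity (including its order) must be fed into the argument carefully; disposing of the case $\dim A>n-1$ is the other delicate point, while the bookkeeping with the modification $\Psi$ is routine.
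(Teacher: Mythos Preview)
Your route is a long detour around a one--line idea. The paper does not split into cases on the genus of $V$ nor analyse any torus action: it simply observes that the trivial canonical bundle supplies a replacement for the missing metric. Pick $X_1,\dots,X_{n-1}\in A$ spanning the fibre direction at a generic point and set $\omega(v)=\mathrm{vol}(X_1\wedge\cdots\wedge X_{n-1}\wedge v)$. This holomorphic one--form vanishes on the fibres of the algebraic reduction, so (exactly as in Theorem \ref{main thm}, via Lemma \ref{Enoki}) $\Psi^*\omega$ descends to a holomorphic one--form on the projective curve $V$; hence $\omega$ is closed. But $\omega\not\equiv 0$, since $\mathrm{vol}$ is nowhere vanishing and the $X_i$ are generically independent; on a simply connected compact manifold this is impossible. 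You already use the contraction of $\mathrm{vol}$ with $n$ vector fields in your ``open orbit'' case; contracting with $n-1$ of them gives the one--form that finishes the whole proof, and makes your Steps 5--12 unnecessary.

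Beyond being far longer, your plan has a genuine gap at Step 6. The ``determinant comparison of the resulting action map against the relative anticanonical bundle of $t$'' is not an argument: the isotropy map is defined only over a Zariski open subset of $V$, its extension to $\mathbb{CP}^1$ need not record the actual isotropy over special fibres, and no bundle on $\mathbb{CP}^1$ has been identified whose sections would be forced to vanish. Without $\dim A=n-1$ your use of Lemma \ref{isotropy} collapses, because in the presence of positive--dimensional generic isotropy a nontrivial Killing field of $g'$ vanishing at a generic point is no contradiction. (Your Step 5 claim that the generic fibre is already a complex torus is also premature, since at that stage $\mathcal A$ is not yet known to be compact.) All of this evaporates once you notice the one--form $\omega$ above.
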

 
\begin{proof}
Let $X$ be a compact complex manifold bearing a holomorphic rigid geometric structure $g$. Assume, by 
contradiction, that the fundamental group of $X$ is finite. Replacing $X$ by its universal cover we assume 
that $X$ is simply connected.

We now use Theorem \ref{theorem: algebraic dimension} to get an abelian subalgebra $A$ of $H^0(X,\, TX)$ which acts on $X$ 
preserving $g$ and satisfying the condition that each generic fiber of the algebraic reduction of $X$ lies in some
$A$-orbit (hence the dimension 
of $A$ is at least $n-1$). Choose elements $X_1,\, \cdots,\, X_{n-1}$ of $A$ which span, at the generic point $x \,\in
\, X$, the tangent space of the fiber $\pi^{-1}(\pi(x))$ of the algebraic reduction $\pi$ of $X$.

Consider a nontrivial holomorphic section $vol$ of the canonical bundle of $X$. Then the holomorphic one-form $\omega$ on $X$ defined 
by $v\, \longmapsto\,vol(X_1\wedge \ldots \wedge X_{n-1}\wedge v)$ vanishes on the fibers of the algebraic reduction $\pi$. As in the proof 
of Theorem \ref{main thm}, the form $\Psi^*(\omega)$ (see Theorem \ref{thue} for $\Psi$) descends to the complex projective manifold 
$V$, the base manifold of the algebraic reduction (once again, extension across singular fibers is not an
issue in view of Lemma \ref{Enoki}). In particular,
the form $\omega$ is closed. This implies that the fundamental group of $X$ is infinite.
\end{proof}

\section*{Acknowledgements}

We thank Ben McKay for interesting discussions on the subject. I.B. is partially supported by a 
J. C. Bose Fellowship.


\end{document}